\documentclass{article}

\usepackage{PRIMEarxiv}
\usepackage[utf8]{inputenc} 
\usepackage[T1]{fontenc}    
\usepackage{hyperref}       
\usepackage{url}            
\usepackage{booktabs}       
\usepackage{bm}
\usepackage{amssymb}
\usepackage{amsmath}
\usepackage{amsfonts}       
\usepackage{dsfont}
\usepackage{nicefrac}       
\usepackage{microtype}      
\usepackage{lipsum}
\usepackage{fancyhdr}       
\usepackage{graphicx}       
\usepackage{subcaption}
\usepackage{enumitem}
\usepackage{tikz}
\usetikzlibrary{arrows.meta,positioning,calc,decorations.pathreplacing}

\newtheorem{theorem}{Theorem}[section]

\newtheorem{proposition}{Proposition}
\newtheorem{remark}{Remark}

\newenvironment{proof}[1][Proof]{\par\noindent\textbf{#1.} }{\hfill$\square$\par}

\newcounter{assump}

\title{Information Geometry and Asymptotic Theory for SMML Estimators
}

\author{
  Enes Makalic \\
  Faculty of Information Technology \\
  Monash University \\
  Clayton\\
  \texttt{enes.makalic@monash.edu} \\
   \And
  Daniel Francis Schmidt \\
  Faculty of Information Technology \\
  Monash University \\
  Clayton\\
  \texttt{daniel.schmidt@monash.edu} \\
}

\DeclareMathOperator*{\argmax}{arg\,max}
\DeclareMathOperator*{\argmin}{arg\,min}

\begin{document}
\maketitle

\begin{abstract}
Strict minimum message length (SMML) is an information-theoretic coding principle that represents a continuous statistical model by a finite set of assertions and a partition of the sample space. We show that the SMML objective decomposes into assertion entropy and conditional cross-entropy, balancing the cost of identifying an assertion against the cost of encoding data under the assigned model. For any fixed partition, the optimal codepoint for each cell is the model distribution that minimises Kullback--Leibler (KL) divergence from the data distribution restricted to that cell. 
Using the local Fisher--Rao geometry of regular parametric models, we show that, under a high-resolution LAN-scale regime, SMML partitions are asymptotically the pullback, through the maximum-likelihood estimator, of weighted Fisher--Rao Voronoi tessellations in parameter space, with assertion probabilities appearing as additive weights.
%
%
For regular exponential families, SMML codepoints satisfy a moment-matching condition and admit an interpretation as KL/Bregman centroids, while exact SMML cells are pullbacks of convex polyhedra in sufficient-statistic space. Together, these results show that SMML induces a natural information-geometric quantisation linking entropy-based coding, KL projection, and divergence-based Voronoi geometry.
\end{abstract}

\keywords{strict minimum message length \and asymptotic theory \and information geometry \and Fisher--Rao distance \and Kullback--Leibler projection \and exponential family}
\section{Introduction}
\label{sec:introduction}

Minimum message length (MML) inference~\cite{WallaceBoulton68,WallaceBoulton75,WallaceFreeman87,Wallace96,WallaceDowe99c,Wallace05} is an information-theoretic approach to statistical estimation in which data are encoded by a two-part message: an assertion specifying a model or parameter value, followed by an encoding the data conditional on that assertion. Strict minimum message length (SMML)~\cite{WallaceBoulton75,Wallace05} is the ideal finite-codebook version of this principle. It replaces a continuous parameter space by a finite set of assertions, assigns each data set to one assertion, and~chooses both the assertions and the induced partition of the sample space by minimising expected~codelength.

\textls[-15]{In this paper, we study SMML as an entropy--cross-entropy coding principle for finite-resolution statistical inference. The~SMML objective balances assertion entropy against the conditional cross-entropy of encoding data under the assigned model, thereby quantising a continuous statistical model by a finite codebook. 
We show that, in~regular parametric models and under a high-resolution local asymptotic normality (LAN) regime, this quantisation has an asymptotic Fisher--Rao geometry in which the codepoints are characterised by Kullback--Leibler (KL) projections~\cite{KullbackLeibler51} and the induced partitions are asymptotically governed by weighted Fisher--Rao Voronoi tessellations.}
%
%
In exponential families, the~same structure appears explicitly as KL/Bregman centroids satisfying moment-matching~conditions.

Let 
 \(\mathbf{x}=(x_1,\ldots,x_n)\in\mathcal{X}_n\) denote a data set of size \(n\), where \(\mathcal{X}_n\) is assumed to be countable, possibly infinite.
We consider a parametric family 
%
\[
\{p_n(\mathbf{x}\mid\bm{\theta}):\bm{\theta}\in\Theta\subseteq\mathbb{R}^p\},
\]
with prior density \(\pi(\bm{\theta})\). We assume that the data ${\bf x} \in \mathcal{X}_n$ form an independent and identically distributed sample, so that
\(
p_n(\mathbf{x}\mid\bm{\theta})=\prod_{i=1}^n f(x_i\mid\bm{\theta}),
\)
where \(f(\cdot\mid\bm{\theta})\) denotes the per-observation probability mass function. The~marginal distribution of the data is
\begin{align}
\label{eqn:marginal:intro}
r_n(\mathbf{x})
=
\int_{\Theta}
p_n(\mathbf{x}\mid\bm{\theta})\pi(\bm{\theta})\,d\bm{\theta}.
\end{align}
Let
$
\mathcal{P}=\{P_1,\ldots,P_k\}
$
be a finite partition of \(\mathcal{X}_n\) into non-empty cells, and~define the assertion probabilities
\begin{align}
\label{eqn:qj:rx}
q_j
=
\sum_{\mathbf{x}\in P_j}r_n(\mathbf{x}),
\qquad
j=1,\ldots,k.
\end{align}
Throughout, we restrict attention to partitions with $q_j > 0$ for all \(j\).
Under SMML encoding~\cite{WallaceBoulton75,WallaceDowe99a,Wallace05}, the~expected two-part codelength associated with \(\mathcal{P}\) is
\begin{align}
\label{eqn:smml}
\mathcal{I}(\mathcal{P})
&=
\underbrace{-\sum_{j=1}^k q_j\log q_j}_{H(Q):\ \text{assertion entropy}}
\underbrace{-\sum_{j=1}^k\sum_{\mathbf{x}\in P_j}
r_n(\mathbf{x})\log p_n(\mathbf{x}\mid\bm{\theta}_j^*)}_{\text{expected detail codelength}}  \\
&= H(r_n) + \sum_{j=1}^{k} q_j D_{\mathrm{KL}}
\left(
\bar r_{j,n}
\,\middle\|\,
p_n(\cdot\mid\bm{\theta}_j^*)
\right) \label{eqn:smml:twopart},
\end{align}
where \(Q=(q_1,\ldots,q_k)\) is the distribution of the assertion message, $H(\cdot)$ is the entropy, $D_{\mathrm{KL}}$ denotes Kullback--Leibler (KL) divergence and
\[
\bar r_{j,n}(\mathbf{x})
=
\frac{r_n(\mathbf{x})}{q_j}\mathds{1}\{\mathbf{x}\in P_j\}
\]
is the conditional marginal distribution in cell \(P_j\). Unless~otherwise stated, logarithms are natural, so codelengths are measured in nats. The~first term in \eqref{eqn:smml} is the Shannon entropy of the assertion distribution, while the second is the expected detail codelength incurred when data in cell \(P_j\) are encoded using \(p_n(\cdot\mid\bm{\theta}_j^*)\).

For each cell \(P_j\), the~associated SMML codepoint is chosen to minimise the expected detail codelength within that cell:
\begin{align}
\label{eqn:smml:estimate}
\bm{\theta}_j^*
&=
\argmin_{\bm{\theta}\in\Theta}
\sum_{\mathbf{x}\in P_j}
r_n(\mathbf{x})\{-\log p_n(\mathbf{x}\mid\bm{\theta})\}  
=
\argmax_{\bm{\theta}\in\Theta}
\sum_{\mathbf{x}\in P_j}
r_n(\mathbf{x})\log p_n(\mathbf{x}\mid\bm{\theta}).
\end{align}
%
%
From \eqref{eqn:smml:twopart}, the~optimal SMML codepoints satisfy
\begin{align}
\label{eqn:klprojection:intro}
\bm{\theta}_j^*
=
\argmin_{\bm{\theta}\in\Theta}
D_{\mathrm{KL}}
\left(
\bar r_{j,n}
\,\middle\|\,
p_n(\cdot\mid\bm{\theta})
\right), \quad j = 1,\ldots, k.
\end{align}
Thus, for~a fixed partition, each SMML codepoint is the KL, or~information, projection of the normalised cellwise distribution onto the parametric model family in the sense of Csisz\'ar~\cite{Csiszar75}. The~remaining optimisation problem is to choose the partition itself,
\begin{align}
\label{eqn:smml:partition}
\mathcal{P}^*
=
\argmin_{\mathcal{P}\in\Pi}
\mathcal{I}(\mathcal{P}),
\end{align}
where \(\Pi\) is a prescribed class of admissible partitions of
\(\mathcal{X}_n\). The~resulting partition and codepoints \(\Theta^*=\{\bm{\theta}_1^*,\ldots,\bm{\theta}_k^*\}\) determine the SMML estimator~\cite{WallaceBoulton75,WallaceDowe99a,Wallace05}. For~the optimal partition, we write \(q_j^*\), or~\(q_{j,n}^*\) in sample-size-dependent settings for~the corresponding assertion~probabilities.

We make four main contributions. First, we characterise SMML codepoints as KL projections of conditional cell distributions onto the model family, making explicit the entropy--cross-entropy structure of the SMML objective. Second, under~a high-resolution local asymptotic normality (LAN) regime, we show that SMML partitions are asymptotically the pullback, through the maximum-likelihood estimator (MLE), of~weighted Fisher--Rao Voronoi tessellations in parameter space. Third, in~the same regime, we derive an asymptotic weighted-average representation of the SMML codepoints and use it to establish consistency and the usual \(n^{-1/2}\) parametric rate. Fourth, for~exponential families, we show that SMML codepoints satisfy moment matching and that the induced partition is polyhedral in sufficient-statistic space, connecting SMML to KL/Bregman centroids and dually flat geometry.
%

Our exponential-family results are closely related to Dowty's work on SMML estimators with continuous sufficient statistics~\cite{Dowty13}. Dowty showed that the regions of such estimators are convex polytopes described in terms of assertions and coding probabilities. We obtain the corresponding polyhedral structure in the countable sample-space setting and connect it to KL projection, Bregman centroids, and~the asymptotic Fisher--Rao quantisation developed here. The~covering interpretation used below is also related to the distinguishable-distribution perspective of Balasubramanian and of Myung~et~al.~\cite{Balasubramanian97,MyungEtAl00}, where Fisher--Rao volume is interpreted as a count of statistically distinguishable probability distributions. Whereas that work concerns model selection and geometric model complexity, the~present paper studies how SMML induces a finite entropy-coded codebook and partition within a fixed parametric~model.

The remainder of the paper is organised as follows.
Section~\ref{sec:regularity} introduces the regularity conditions and the asymptotic quantisation framework. Section~\ref{sec:fisherrao} establishes the connection between SMML partitions and weighted Fisher--Rao Voronoi tessellations, and~derives the weighted-average representation of SMML codepoints. Section~\ref{sec:consistency} gives the statistical consequences, including the consistency and the \(n^{-1/2}\) convergence rate.
Section~\ref{sec:expfam} specialises the theory to exponential families, where codepoints become KL/Bregman centroids and the partition is polyhedral in sufficient-statistic space. Section~\ref{sec:discussion} discusses connections with entropy-based coding, information-theoretic quantisation, and~possible extensions.
\section{Asymptotic Quantisation Framework and Regularity~Conditions}
\label{sec:regularity}
The SMML objective in~\eqref{eqn:smml} defines a finite-resolution
information-theoretic approximation to a continuous statistical model. Each assertion corresponds to a codepoint in parameter space, and~each data set is assigned to one of these codepoints through the induced partition of the data space \(\mathcal{X}_n\). Thus, an SMML code simultaneously induces a partition of the sample space and a finite quantisation of the parameter space \(\Theta\).

For a sample of size \(n\), let
\[
\Theta_n^*
=
\{\bm{\theta}_{1,n}^*,\ldots,\bm{\theta}_{k_n,n}^*\}
\]
denote the SMML codebook, where \(k_n\) is the number of assertions. The~asymptotic regime considered here is a high-resolution regime in which \(k_n\) may grow with \(n\) and the local mesh of the induced quantisation shrinks as the data become more informative. This is the regime in which a finite SMML codebook can approximate the underlying continuous~model.

The relevant local geometry is governed by Kullback--Leibler
divergence~\cite{KullbackLeibler51}. For~nearby parameter values,
\[
D_{\mathrm{KL}}
\!\left(
f(\cdot\mid\bm{\theta})
\,\middle\|\,
f(\cdot\mid\bm{\theta}+d\bm{\theta})
\right)
=
\frac12
d\bm{\theta}^{\prime}
{\bf J}_1(\bm{\theta})
d\bm{\theta}
+
o(\|d\bm{\theta}\|^2),
\]
where \({\bf J}_1(\bm{\theta})\) is the per-observation Fisher information matrix. Hence, the natural local metric for analysing SMML codepoints is the per-observation Fisher--Rao metric. Since the full-sample Fisher information is \(n{\bf J}_1(\bm{\theta})\), displacements of order \(n^{-1/2}\) in this metric correspond to constant-order changes in full-sample likelihood~geometry.

This \(n^{-1/2}\) scale is the statistical scale of distinguishability. Balasubramanian~\cite{Balasubramanian97} and Myung~et~al.~\cite{MyungEtAl00} show that Fisher--Rao volume may be interpreted as counting statistically distinguishable probability distributions. For~a finite sample size, nearby distributions inside a Fisher information ellipsoid are difficult to distinguish, and~these indistinguishability
neighbourhoods shrink at the parametric rate as \(n\) grows. The~same scale appears in local asymptotic normality (LAN). Under~the local reparameterisation
\(
\bm{\theta}=\bm{\theta}_0+\mathbf{h}/\sqrt n
\),
the log-likelihood ratio admits a non-degenerate quadratic Gaussian limit, with~curvature given by the Fisher~information.

In the SMML setting, the~assertion codebook may therefore be interpreted as a finite entropy-coded covering of the model at a chosen Fisher--Rao resolution. On~a compact set \(K\subset\Theta\), a~Fisher--Rao ball of radius \(\delta_n\) has volume of order \(\delta_n^p\). Thus, up~to curvature and boundary effects, the~number of assertions required to cover \(K\) at uniform resolution \(\delta_n\) is heuristically
\[
N_K(\delta_n)
\asymp
\frac{\operatorname{Vol}_F(K)}{\delta_n^p},
\qquad
\operatorname{Vol}_F(K)
=
\int_K |{\bf J}_1(\bm{\theta})|^{1/2}\,d\bm{\theta}.
\]
More generally, for~a non-uniform mesh with local radius
\(\delta_n(\bm{\theta})\),
\[
k_n(K)
\asymp
\int_K
\frac{|{\bf J}_1(\bm{\theta})|^{1/2}}
{\delta_n(\bm{\theta})^p}
\,d\bm{\theta}.
\]
The natural benchmark resolution is the LAN-scale \(\delta_n(\bm{\theta})\asymp n^{-1/2}\), which yields  \(k_n(K)\asymp n^{p/2}\operatorname{Vol}_F(K)\). Thus, \(n^{p/2}\) is the benchmark order for a LAN-scale codebook on compact subsets.

The results below are established under the following regularity conditions. Assumptions \ref{ass:A1}--\ref{ass:A6} are standard likelihood regularity conditions; see, e.g.,~\cite{Vaart98}. Assumption \ref{ass:A7} specifies the local high-resolution structure of the SMML quantisation by placing the selected SMML codepoint within an \(O(n^{-1/2})\) Fisher--Rao neighbourhood of the MLE and requiring local quadratic Fisher--Rao loss inside the corresponding cells.
\begin{enumerate}[label=(A\arabic*)]

\item \label{ass:A1}
The parameter space $\Theta$ is an open subset of $\mathbb{R}^p$.

\item \label{ass:A2}
The true parameter $\bm{\theta}_0$ belongs to $\operatorname{int}(\Theta)$.

\item \label{ass:A3}
The support of $p_n(\mathbf{x}|\bm{\theta})$ does not depend on $\bm{\theta}\in\Theta$. The~prior $\pi(\bm{\theta})$ is a proper density on $\Theta$, continuous and strictly positive on a neighbourhood of the true parameter $\bm{\theta}_0$. For~each $n$ and each $\mathbf{x}\in\mathcal{X}_n$, the~marginal distribution
\[
r_n(\mathbf{x})
=
\int_\Theta
p_n(\mathbf{x}\mid\bm{\theta})
\pi(\bm{\theta})
\,d\bm{\theta}
\]
is~finite.

\item \label{ass:A4}
For each $\mathbf{x}\in\mathcal{X}_n$, the~map $\bm{\theta}\mapsto \log p_n(\mathbf{x}|\bm{\theta})$ is three times continuously differentiable on $\Theta$. Moreover, for~all data sets $\mathbf{x}$ in the SMML cells under consideration, the~maximum likelihood estimator $\hat{\bm{\theta}}(\mathbf{x})$ exists and belongs to $\Theta$.
The model is identifiable and satisfies standard regularity conditions under which the maximum likelihood estimator is consistent at the usual parametric~rate.

\item \label{ass:A5}
Let
\[
\ell_n(\mathbf{x}|\bm{\theta})
=
\log p_n(\mathbf{x}|\bm{\theta})
\]
denote the log-likelihood for an i.i.d. data set of size $n$, and~let
\begin{align}
{\bf J}_1(\bm{\theta})
=
\mathbb{E}_{\bm{\theta}}
\!\left[
\nabla_{\bm{\theta}}\log f(X_1|\bm{\theta})
\nabla_{\bm{\theta}}\log f(X_1|\bm{\theta})^\prime
\right]
\end{align}
denote the per-observation Fisher information matrix. Thus, the full-sample Fisher information is
\[
{\bf J}_n(\bm{\theta})
=
n\,{\bf J}_1(\bm{\theta}).
\]
Assume that ${\bf J}_1(\bm{\theta})$ exists and is locally uniformly positive definite on $\Theta$: for every compact set $K\subset\Theta$, there exist constants $0<c_1(K)\le c_2(K)<\infty$ such that
\begin{align}
\label{eqn:fisherbound}
c_1(K)\,{\bf I}_p
\preceq
{\bf J}_1(\bm{\theta})
\preceq
c_2(K)\,{\bf I}_p,
\qquad
\bm{\theta}\in K,
\end{align}
where ${\bf I}_p$ is the $p\times p$ identity matrix and $\preceq$ denotes the Loewner~order.

\item \label{ass:A6}
Differentiation under the integral sign is valid whenever it is used~below.

\item \label{ass:A7}
%
Fix a compact set \(K\subset\Theta\) on which~\ref{ass:A5} holds. The~conditional distributions of \(\sqrt n\,d_F(\hat{\bm{\theta}}(\mathbf{X}_n),\bm{\theta}_{j,n}^*)\) given \(\mathbf{X}_n\in P_{j,n}^*\) are uniformly tight over SMML cells with \(\bm{\theta}_{j,n}^*\in K\): for every \(\epsilon>0\), there exists \(M<\infty\) such that, for~all sufficiently large \(n\),
\[
\sup_{j:\,\bm{\theta}_{j,n}^*\in K}
\mathbb{P}\!\left(
\sqrt n\,
d_F\!\left(\hat{\bm{\theta}}(\mathbf{X}_n),\bm{\theta}_{j,n}^*\right)>M
\,\middle|\,
\mathbf{X}_n\in P_{j,n}^*
\right)
<\epsilon.
\]
Moreover,
\[
\ell_n(\mathbf{X}_n\mid\bm{\theta}_{j,n}^*)
=
\ell_n(\mathbf{X}_n\mid\hat{\bm{\theta}}(\mathbf{X}_n))
-
\frac n2 d_F^2\!\left(\hat{\bm{\theta}}(\mathbf{X}_n),\bm{\theta}_{j,n}^*\right)
+
o_p(1),
\]
conditional on \(\mathbf{X}_n\in P_{j,n}^*\), uniformly over such cells.
\end{enumerate}
Assumption~\ref{ass:A7} should be understood as a conditional high-resolution regularity condition. It places the SMML codebook in an LAN-scale regime in which the selected codepoint is \(O(n^{-1/2})\) from the MLE in Fisher--Rao distance, and~likelihood differences inside the corresponding cells are governed by local quadratic Fisher--Rao loss. 
Consequently, the~asymptotic results in Sections~\ref{sec:fisherrao} and
\ref{sec:consistency} are conditional on this high-resolution regime.
We do not claim here that this regime follows automatically from global SMML optimality for every regular model.
%
\begin{remark}[Rate--distortion interpretation of the high-resolution regime]
The SMML objective decomposes into an assertion-entropy term and a conditional Kullback--Leibler distortion term, up~to partition-independent constants. The~assertion entropy plays the role of rate, while the conditional KL term plays the role of distortion. In~local coordinates, the~KL distortion is quadratically approximated by Fisher information, so the high-resolution SMML problem is naturally compared with high-rate quantisation under quadratic Fisher--Rao distortion. From~this viewpoint, \ref{ass:A7} places the SMML estimator in the regime where the entropy--cross-entropy trade-off is resolved at the LAN~scale.

A local split-optimality heuristic suggests why this scale is natural. Splitting a cell of assertion mass \(q_j\) incurs an entropy cost of order \(q_j\), whereas, under~local quadratic Fisher--Rao loss, the~potential reduction in detail codelength is of order \(q_j n r_j^2\) for a cell of effective Fisher--Rao radius \(r_j\). Balancing these terms heuristically gives \(r_j^2\asymp n^{-1}\). A~rigorous derivation of this LAN-scale localisation directly from global SMML optimality, including boundary and heterogeneity effects, is left for future work.
\end{remark}
\section{Fisher--Rao Geometry of~SMML}
\label{sec:fisherrao}
We now derive the local geometric form of the SMML decision rule under the high-resolution regime described in \ref{ass:A7}. In~this regime, each data set \({\bf x}\) is first mapped to parameter space by its maximum-likelihood estimate (MLE) \(\hat{\bm{\theta}}({\bf x})\). The~selected SMML assertion is then the nearby codepoint that gives the shortest asymptotic two-part message. Since the loss from replacing \(\hat{\bm{\theta}}({\bf x})\) by a nearby codepoint is governed locally by Fisher--Rao quadratic loss, the~assignment rule takes the form of a weighted Fisher--Rao Voronoi decision rule, with~the assertion probabilities appearing as additive weights.

Throughout this section, \(d_F(\bm{\theta}_1,\bm{\theta}_2)\) denotes the per-observation Fisher--Rao distance induced by the metric \({\bf J}_1(\bm{\theta})\). For~nearby parameter values,
\begin{align}
\label{eqn:FRlocal}
d_F^2(\bm{\theta}_1,\bm{\theta}_2)
=
(\bm{\theta}_1-\bm{\theta}_2)^\prime
{\bf J}_1(\bar{\bm{\theta}})
(\bm{\theta}_1-\bm{\theta}_2)
+
o\!\left(\|\bm{\theta}_1-\bm{\theta}_2\|^2\right),
\end{align}
where \(\bar{\bm{\theta}}\) lies on the line segment joining \(\bm{\theta}_1\) and \(\bm{\theta}_2\). The~use of the per-observation metric separates the intrinsic geometry of the model from the sample-size scaling of the full likelihood, whose curvature is \(n{\bf J}_1(\bm{\theta})\).

\begin{theorem}[SMML partitions and weighted Fisher--Rao Voronoi cells]
\label{thm:smml:fr:equivalence}
Let
\[
\mathcal{P}_n^{*}
=
\{P_{j,n}^*\}_{j=1}^{k_n},
\qquad
\bm{\Theta}_n^*
=
\{\bm{\theta}_{j,n}^*\}_{j=1}^{k_n}
\]
denote an optimal SMML partition and its associated codepoints, with~assertion probabilities
\[
q_{j,n}^*
=
\sum_{\mathbf{x}\in P_{j,n}^*} r_n(\mathbf{x}) > 0.
\]
Define the weighted Fisher--Rao Voronoi cells
\begin{align}
\label{eqn:weightedvoronoi}
V_{j,n}
=
\left\{
\bm{\theta}\in\Theta :
d_F^2(\bm{\theta},\bm{\theta}_{j,n}^*)+\omega_{j,n}
\le
d_F^2(\bm{\theta},\bm{\theta}_{\ell,n}^*)+\omega_{\ell,n},
\ \forall \ell
\right\},
\end{align}
where
\[
\omega_{j,n}
=
-\frac{2}{n}\log q_{j,n}^*.
\]
Under~\ref{ass:A1}--\ref{ass:A7}, and~assuming that the local quadratic approximation in \ref{ass:A7} holds uniformly for all assertions that are locally competitive with the selected assertion, the~optimal SMML partition is, up~to tie-breaking on a set of probability \(o(1)\), the~pullback of this weighted Fisher--Rao Voronoi tessellation under the MLE map
\begin{align}
\label{eqn:smml:Fisher:Voronoi}
\mathbb{P}\!\left(
\mathbf{X}_n\in P_{j,n}^*
\iff
\hat{\bm{\theta}}(\mathbf{X}_n)\in V_{j,n}
\right)
\to
1,
\end{align}
for each $j = 1,\ldots,k_n$. Here, an~assertion is called locally competitive for \(x\) if its two-part codelength differs from the minimum two-part codelength by \(O(1)\).
Moreover, the~pairwise boundary between two locally competing cells, \(j\) and
\(\ell\), satisfies
\begin{align}
\label{eqn:smml:cell:boundary}
\frac{n}{2}
\left\{
d_F^2(\bm{\theta},\bm{\theta}_{j,n}^*)
-
d_F^2(\bm{\theta},\bm{\theta}_{\ell,n}^*)
\right\}
=
\log\frac{q_{j,n}^*}{q_{\ell,n}^*}
+
o(1),
\end{align}
uniformly on compact subsets where the local quadratic approximation is valid. The~corresponding data-space boundary is obtained by pulling this boundary back under \(\mathbf{x}\mapsto\hat{\bm{\theta}}(\mathbf{x})\).
\end{theorem}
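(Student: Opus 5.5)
The plan is to use the first-order optimality of the partition: at an optimum of \eqref{eqn:smml:partition}, no single data set can be moved to another cell and shorten the expected codelength. First I would write the two-part codelength of assigning $\mathbf{x}$ to assertion $j$ as
\[
L_j(\mathbf{x}) = -\log q_{j,n}^* - \ell_n(\mathbf{x}\mid\bm{\theta}_{j,n}^*).
\]
Next I would check that an optimal partition assigns each $\mathbf{x}$ to an $\arg\min_j L_j(\mathbf{x})$, up to a correction that vanishes asymptotically. Moving one $\mathbf{x}$ of mass $r_n(\mathbf{x})$ from $P_j$ to $P_\ell$ changes $\mathcal{I}$ by
\[
r_n(\mathbf{x})\{L_\ell(\mathbf{x})-L_j(\mathbf{x})\}
\]
plus terms coming from the induced changes in $q_j,q_\ell$ and in the codepoints. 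The codepoint changes are second order because $\bm{\theta}_j^*$ solves the stationarity condition \eqref{eqn:smml:estimate}. For the $q$ changes, differentiating $-\sum q\log q$ gives $-\log q_\ell-1+\log q_j+1$, which is already contained in $L_\ell-L_j$. So the remainder is $O(r_n(\mathbf{x})^2/q)$, and it is negligible whenever individual atoms carry vanishing mass relative to their cell. This is where I expect to need the high-resolution regime; the statement's allowance of an $o(1)$-probability tie set absorbs the exceptions.

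Then I would substitute the quadratic expansion of \ref{ass:A7}, which by hypothesis holds uniformly over locally competitive assertions:
\[
L_j(\mathbf{x}) = -\ell_n(\mathbf{x}\mid\hat{\bm{\theta}}(\mathbf{x})) + \frac n2\left\{d_F^2(\hat{\bm{\theta}}(\mathbf{x}),\bm{\theta}_{j,n}^*)+\omega_{j,n}\right\}+o_p(1).
\]
The first term does not depend on $j$. So, among competitive assertions, minimising $L_j$ is equivalent, up to $o_p(1)$, to minimising $d_F^2(\hat{\bm{\theta}},\bm{\theta}_j^*)+\omega_j$, which is exactly the rule defining $V_{j,n}$ in \eqref{eqn:weightedvoronoi}.

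Non-competitive assertions have codelength larger than the minimum by more than any $O(1)$ constant. Taking $M$ large in the tightness part of \ref{ass:A7} shows they are selected with probability at most $\epsilon$, and they also cannot be the weighted-Voronoi minimiser on that event. This gives the equivalence in \eqref{eqn:smml:Fisher:Voronoi}, except where two competitive scores differ by $o_p(1)$. The remaining step is to show that this near-tie event has vanishing probability. I would argue that $\sqrt n\,\hat{\bm{\theta}}$ has a conditional distribution that is asymptotically absolutely continuous, by LAN and the continuity and positivity of the prior. The $o(1)$-neighbourhood of a smooth boundary in rescaled coordinates then has vanishing mass. Controlling this uniformly in $j$ while $k_n$ grows is the main obstacle. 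I would handle it by working on compact $K$ with the Fisher bounds \eqref{eqn:fisherbound}, and by using the fact that only boundedly many codepoints lie within $O(n^{-1/2})$ of any point, since cells have LAN-scale radius. Tie-breaking is absorbed into the $o(1)$ set.

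Finally, the boundary equation \eqref{eqn:smml:cell:boundary} follows by setting $L_j=L_\ell$ in the expansion above. This gives
\[
\frac n2\{d_F^2(\bm{\theta},\bm{\theta}_j^*)-d_F^2(\bm{\theta},\bm{\theta}_\ell^*)\}=\log(q_j^*/q_\ell^*)+o(1),
\]
uniformly on compacts where the expansion holds. The data-space boundary is then its preimage under $\mathbf{x}\mapsto\hat{\bm{\theta}}(\mathbf{x})$.
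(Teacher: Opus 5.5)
Your main line is the paper's proof. You write the two-part codelength $L_j(\mathbf{x})=-\log q_{j,n}^*-\ell_n(\mathbf{x}\mid\bm{\theta}_{j,n}^*)$, assign $\mathbf{x}$ to a minimiser, and substitute the \ref{ass:A7} expansion for the locally competitive assertions. You then cancel the $j$-independent term $-\ell_n(\mathbf{x}\mid\hat{\bm{\theta}})$ and equate two competing codelengths to get \eqref{eqn:smml:cell:boundary}. The paper treats the argmin-assignment step as immediate once the codebook is fixed. It does not argue the non-competitive or near-tie cases; it folds them into the uniform local-competition hypothesis and the $o(1)$ tie set of the statement.

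Your justification of the argmin step loses more than it needs to, and it leaves an unproved condition: atoms of vanishing mass relative to their cell, with the exceptions said to be absorbed into the tie set. In fact the remainder has a sign.
\begin{itemize}
\item The map $s\mapsto-(q_j-s)\log(q_j-s)-(q_\ell+s)\log(q_\ell+s)$ is concave on $[0,q_j]$, including the endpoint where the move empties $P_j$. So the entropy change is at most its first-order term $r_n(\mathbf{x})(\log q_j-\log q_\ell)$.
\item Re-optimising the two affected codepoints can only lower the detail term.
\end{itemize}
Hence moving $\mathbf{x}$ from $P_j$ to $P_\ell$ changes $\mathcal{I}$ by at most $r_n(\mathbf{x})\{L_\ell(\mathbf{x})-L_j(\mathbf{x})\}$. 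At an optimum, provided single-atom moves are admissible in $\Pi$, $L_j(\mathbf{x})\le L_\ell(\mathbf{x})$ then holds exactly for every $\mathbf{x}\in P_j$. This needs no atom-size condition and no use of the high-resolution regime.

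This also makes your appeal to tightness for non-competitive assertions unnecessary. The selected assertion is an exact minimiser, so it is competitive by definition. The other half of that sentence remains unproved, in your proposal and equally in the paper: that a non-competitive $\ell$ cannot minimise $d_F^2(\hat{\bm{\theta}},\bm{\theta}_{\ell,n}^*)+\omega_{\ell,n}$. Both directions of \eqref{eqn:smml:Fisher:Voronoi} need this. It requires control of $\ell_n(\mathbf{x}\mid\hat{\bm{\theta}})-\ell_n(\mathbf{x}\mid\bm{\theta}_{\ell,n}^*)$ in terms of $\tfrac n2 d_F^2$ for codepoints outside the competitive set, and the hypotheses do not supply that.

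In your near-tie step, the claim that only boundedly many codepoints lie within $O(n^{-1/2})$ of any point does not follow from \ref{ass:A7}. That assumption bounds the distance from an MLE to its own codepoint, i.e.\ it bounds cell size from above. It says nothing about separation between codepoints, so sub-LAN clustering of codepoints is not excluded. You would need to add a separation condition, or derive one from optimality. Alternatively, as the paper does, read the $o(1)$ near-tie set as part of the statement rather than something you prove.
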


\begin{proof}
Fix the optimal codebook \(\{(\bm{\theta}_{j,n}^*,q_{j,n}^*)\}_{j=1}^{k_n}\). Conditional on this codebook, assigning \(\mathbf{x}\) to assertion \(j\) gives the codelength
\[
\Lambda_{j,n}(\mathbf{x})
=
-\log q_{j,n}^*
-
\log p_n(\mathbf{x}\mid\bm{\theta}_{j,n}^*).
\]
Thus, the induced partition assigns \(\mathbf{x}\) to an index minimising \(\Lambda_{j,n}(\mathbf{x})\), up~to ties.
Let \(\hat{\bm{\theta}}=\hat{\bm{\theta}}({\bf x})\). 
By~\ref{ass:A7},
\[
d_F\!\left(\hat{\bm{\theta}}({\bf X}_n),\bm{\theta}_{j,n}^*\right)
=
O_p(n^{-1/2})
\qquad
\text{conditional on } {\bf X}_n\in P_{j,n}^*.
\]
Moreover,
\[
-\log p_n({\bf X}_n\mid\bm{\theta}_{j,n}^*)
=
-\log p_n({\bf X}_n\mid\hat{\bm{\theta}}({\bf X}_n))
+
\frac n2 d_F^2\!\left(\hat{\bm{\theta}}({\bf X}_n),\bm{\theta}_{j,n}^*\right)
+
o_p(1),
\]
conditional on \({\bf X}_n\in P_{j,n}^*\), uniformly over such cells. By~the additional uniform local-competition assumption in the statement of
the theorem, the~same expansion holds for all assertions whose two-part
codelengths are within \(O(1)\) of the minimum, and~hence for all assertions that can affect the asymptotic decision boundary.
Therefore,
\[
\Lambda_{j,n}({\bf X}_n)
=
-\log q^*_{j,n}
-
\log p_n({\bf X}_n \mid \hat{\bm{\theta}})
+
\frac n2 d_F^2(\hat{\bm{\theta}},\bm{\theta}^*_{j,n})
+
o_p(1).
\]
Since the middle term does not depend on \(j\), the~selected assertion asymptotically minimises
\[
-\log q^*_{j,n}
+
\frac n2 d_F^2(\hat{\bm{\theta}},\bm{\theta}^*_{j,n}),
\]
or equivalently
\[
d_F^2(\hat{\bm{\theta}},\bm{\theta}^*_{j,n})-\frac{2}{n}\log q^*_{j,n},
\]
which is the weighted Fisher--Rao Voronoi rule. Equating the asymptotic codelengths for two locally competing assertions \(j\) and \(\ell\) yields \eqref{eqn:smml:cell:boundary}. Pulling this boundary back through the MLE map gives the corresponding boundary in data space.
\end{proof}

\begin{remark}[Uniform assertion probabilities]
If assertion probabilities are uniformly asymptotically equal, in~the sense that
\[
\sup_{1\le j,\ell\le k_n} \left| \log \frac{q_{j,n}^*}{q_{\ell,n}^*}\right| \to 0,
\]
then
\[
\omega_{j,n}-\omega_{\ell,n}
=
-\frac{2}{n}\log\frac{q_{j,n}^*}{q_{\ell,n}^*}
=
o(n^{-1}).
\]
Hence, the weighted Fisher--Rao Voronoi tessellation reduces asymptotically to the ordinary Fisher--Rao Voronoi tessellation. In~this case, local SMML assignment is governed primarily by Fisher--Rao distance.
\end{remark}

Theorem~\ref{thm:smml:fr:equivalence} shows that SMML partitions are governed locally by Fisher--Rao geometry. Figure~\ref{fig:smml:pullback} depicts the SMML assignment mechanism, showing how the partition of data space arises due to the pullback, 
 through the MLE map, of~weighted Fisher--Rao Voronoi cells in parameter space. The~next result identifies the corresponding location of the codepoints in the same high-resolution regime. It shows that, within~each SMML cell, the~associated codepoint is asymptotically the marginal-predictive weighted average of the MLEs assigned to that cell.

\begin{figure}
\begin{center}
\begin{tikzpicture}[
    >=Latex,
    font=\small,
    panel/.style={draw=black!70, line width=0.7pt, rounded corners=2pt},
    boundary/.style={draw=black!65, dashed, line width=0.9pt},
    maparrow/.style={->, line width=0.9pt, black!75},
    assignarrow/.style={->, line width=0.8pt, black!55, dashed},
    datapoint/.style={circle, fill=black, inner sep=1.35pt},
    mlepoint/.style={circle, fill=black!70, inner sep=1.25pt},
    codepoint/.style={circle, fill=black, draw=white, line width=0.4pt, inner sep=2.1pt},
    lab/.style={font=\small},
    smalllab/.style={font=\scriptsize}
]

\def\leftx{0}
\def\rightx{7.4}
\def\panelw{4.6}
\def\panelh{3.3}

\begin{scope}
    \clip (\leftx,-1.65) rectangle ++(\panelw,\panelh);

    \fill[blue!13]   (\leftx,-1.65) rectangle ++(1.75,\panelh);
    \fill[green!13]  (\leftx+1.55,-1.65) -- ++(3.05,0) -- ++(0,3.3) -- ++(-1.45,0) -- cycle;
    \fill[orange!18] (\leftx+1.25,-1.65) -- ++(3.35,0) -- ++(0,1.55) -- ++(-3.0,1.75) -- cycle;

    \draw[boundary] (\leftx+1.55,-1.65) -- (\leftx+1.75,1.65);
    \draw[boundary] (\leftx+1.25,-1.65) -- (\leftx+4.25,1.65);
\end{scope}

\draw[panel] (\leftx,-1.65) rectangle ++(\panelw,\panelh);
\node[lab] at (\leftx+2.3,1.95) {Data space \(\mathcal X_n\)};
\node[smalllab, blue!55!black]   at (\leftx+0.75,1.25) {\(P_{1,n}^*\)};
\node[smalllab, green!45!black]  at (\leftx+3.55,1.35) {\(P_{2,n}^*\)};
\node[smalllab, orange!70!black] at (\leftx+2.75,-1.35) {\(P_{3,n}^*\)};

\node[datapoint] (x1) at (\leftx+0.70,0.55) {};
\node[datapoint] (x2) at (\leftx+1.05,-0.45) {};
\node[datapoint] (x3) at (\leftx+3.50,0.65) {};
\node[datapoint] (x4) at (\leftx+3.05,-0.70) {};
\node[datapoint] (x5) at (\leftx+2.20,-0.95) {};

\node[smalllab] at ($(x1)+(-0.15,0.15)$) {\(\mathbf{x}\)};
\node[smalllab, align=center] at (\leftx+2.30,-2.05)
{SMML cells in data space};

\begin{scope}
    \clip (\rightx,-1.65) rectangle ++(\panelw,\panelh);

    \fill[blue!13]   (\rightx,-1.65) rectangle ++(1.72,\panelh);
    \fill[green!13]  (\rightx+1.55,-1.65) -- ++(3.05,0) -- ++(0,3.3) -- ++(-1.55,0) -- cycle;
    \fill[orange!18] (\rightx+1.20,-1.65) -- ++(3.40,0) -- ++(0,1.45) -- ++(-2.85,1.85) -- cycle;

    \draw[boundary] (\rightx+1.55,-1.65) .. controls (\rightx+1.80,-0.2) .. (\rightx+1.70,1.65);
    \draw[boundary] (\rightx+1.20,-1.65) .. controls (\rightx+2.35,-0.25) .. (\rightx+4.05,1.65);
\end{scope}

\draw[panel] (\rightx,-1.65) rectangle ++(\panelw,\panelh);
\node[lab] at (\rightx+2.3,1.95) {Parameter space \(\Theta\)};
\node[smalllab, blue!55!black]   at (\rightx+0.72,1.25) {\(V_{1,n}\)};
\node[smalllab, green!45!black]  at (\rightx+4.15,1.25) {\(V_{2,n}\)};
\node[smalllab, orange!70!black] at (\rightx+3.85,-1.25) {\(V_{3,n}\)};

\node[codepoint] (t1) at (\rightx+0.78,0.15) {};
\node[codepoint] (t2) at (\rightx+3.85,0.60) {};
\node[codepoint] (t3) at (\rightx+2.65,-0.85) {};

\node[smalllab, anchor=south west] at ($(t1)+(-0.70,-0.35)$) {\(\bm{\theta}_{1,n}^*\)};
\node[smalllab, anchor=south west] at ($(t2)+(0.01,-0.10)$) {\(\bm{\theta}_{2,n}^*\)};
\node[smalllab, anchor=north west] at ($(t3)+(0.02,-0.02)$) {\(\bm{\theta}_{3,n}^*\)};

\node[mlepoint] (m1) at (\rightx+0.95,0.72) {};
\node[mlepoint] (m2) at (\rightx+1.10,-0.35) {};
\node[mlepoint] (m3) at (\rightx+3.25,0.20) {};
\node[mlepoint] (m4) at (\rightx+2.95,-0.35) {};
\node[mlepoint] (m5) at (\rightx+2.35,-1.05) {};

\node[smalllab] at ($(m1)+(0.25,0.25)$) {\(\hat{\bm{\theta}}(\mathbf{x})\)};

\node[smalllab, align=center] at (\rightx+2.30,-2.05)
{Weighted Fisher--Rao Voronoi cells};

\draw[maparrow] (x1) to[bend left=10] node[above, sloped, smalllab] {} (m1);
\draw[maparrow] (x2) to[bend right=6] (m2);
\draw[maparrow] (x3) to[bend left=5] (m3);
\draw[maparrow] (x4) to[bend right=5] (m4);
\draw[maparrow] (x5) to[bend right=10] (m5);

\node[lab, align=center] at (5.95,1.40)
{\(\mathbf{x}\mapsto\hat{\bm{\theta}}(\mathbf{x})\)\\[-1mm]
\scriptsize MLE map};

\draw[assignarrow] (m1) -- (t1);
\draw[assignarrow] (m2) -- (t1);
\draw[assignarrow] (m3) -- (t2);
\draw[assignarrow] (m4) -- (t3);
\draw[assignarrow] (m5) -- (t3);

\node[smalllab, align=center] at (\rightx+2.30,2.55)
{\(\displaystyle
\arg\min_j
\left\{
d_F^2\!\left(\hat{\bm{\theta}},\bm{\theta}_{j,n}^*\right)
-\frac{2}{n}\log q_{j,n}^*
\right\}\)};

\node[datapoint] (legx) at (\leftx+0.30,-2.55) {};
\node[smalllab, anchor=west] at ($(legx)+(0.15,0)$) {data set};

\node[mlepoint] (legm) at (\leftx+1.65,-2.55) {};
\node[smalllab, anchor=west] at ($(legm)+(0.15,0)$) {MLE image};

\node[codepoint] (legt) at (\leftx+3.45,-2.55) {};
\node[smalllab, anchor=west] at ($(legt)+(0.15,0)$) {SMML codepoint};

\draw[boundary] (\rightx+0.10,-2.55) -- ++(0.45,0);
\node[smalllab, anchor=west] at (\rightx+0.65,-2.55) {weighted Voronoi boundary};

\end{tikzpicture}

\caption{
SMML 
 as the pullback of a weighted Fisher--Rao Voronoi tessellation. The~(\textbf{left}) panel shows the induced SMML partition of data space \(\mathcal X_n\), with~cells \(P_{j,n}^*\). Data sets are mapped to parameter space by the MLE map \(\mathbf{x}\mapsto\hat{\bm{\theta}}(\mathbf{x})\). The~(\textbf{right}) panel shows the corresponding weighted Fisher--Rao Voronoi cells \(V_{j,n}\) around the active SMML codepoints \(\bm{\theta}_{j,n}^*\). Dashed curves indicate pairwise weighted Voronoi boundaries. The~SMML cells in data space are the pullbacks of the parameter-space cells under the MLE map.
}
\label{fig:smml:pullback}
\end{center}
\end{figure}

\begin{theorem}[SMML codepoints as weighted averages of MLEs]
\label{thm:smml:weighted:mle}
With the notation of Theorem~\ref{thm:smml:fr:equivalence}, define
\[
w_{j,n}(\mathbf{x})
=
\frac{r_n(\mathbf{x})}{q_{j,n}^*},
\qquad
\mathbf{x}\in P_{j,n}^*.
\]
Under~\ref{ass:A1}--\ref{ass:A7}, fix a compact set
\(K\subset\Theta\) on which~\ref{ass:A5} holds, and~consider any sequence of cells such that \(\bm{\theta}_{j,n}^*\in K\). Suppose, in~addition, that the following score linearisation holds uniformly for \(\mathbf{x} \in P^*_{j,n}\):
\[
\nabla_{\bm{\theta}} \ell_n(\mathbf{x}\mid\bm{\theta}_{j,n}^*)
=
-n {\bf J}_1(\bm{\theta}_{j,n}^*)
\{\bm{\theta}_{j,n}^*-\hat{\bm{\theta}}(\mathbf{x})\}
+
\mathbf{r}_{j,n}(\mathbf{x}),
\]
with
\[
\sup_{\mathbf{x}\in P_{j,n}^*}\|\mathbf{r}_{j,n}(\mathbf{x})\|=o(\sqrt n).
\]
%
%
Then
\[
\bm{\theta}_{j,n}^*
=
\sum_{\mathbf{x}\in P_{j,n}^*}
w_{j,n}(\mathbf{x})\hat{\bm{\theta}}(\mathbf{x})
+
\bm{\varepsilon}_{j,n},
\qquad
\|\bm{\varepsilon}_{j,n}\|=o(n^{-1/2}).
\]
Equivalently,
\[
\bm{\theta}_{j,n}^*
=
\mathbb{E}_{r_n}
\left[
\hat{\bm{\theta}}(\mathbf{X}_n)
\,\middle|\,
\mathbf{X}_n\in P_{j,n}^*
\right]
+
o(n^{-1/2}).
\]
\end{theorem}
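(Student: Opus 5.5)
The plan is to start from the first-order optimality condition that defines the SMML codepoint for a fixed cell, and then substitute the assumed score linearisation. By \eqref{eqn:smml:estimate}, $\bm{\theta}_{j,n}^*$ maximises $\sum_{\mathbf{x}\in P_{j,n}^*} r_n(\mathbf{x})\,\ell_n(\mathbf{x}\mid\bm{\theta})$. Assumption \ref{ass:A1} says $\Theta$ is open and we have $\bm{\theta}_{j,n}^*\in K\subset\Theta$, so the maximiser is interior. Assumptions \ref{ass:A4} and \ref{ass:A6} allow differentiation under the (possibly infinite) sum. Dividing by $q_{j,n}^*>0$, we obtain the weighted score equation
\[
\sum_{\mathbf{x}\in P_{j,n}^*} w_{j,n}(\mathbf{x})\,\nabla_{\bm{\theta}}\ell_n(\mathbf{x}\mid\bm{\theta}_{j,n}^*)=\mathbf{0}.
\]
This is simply the stationarity condition of the KL projection \eqref{eqn:klprojection:intro} onto the model family.

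Next I would insert the linearisation into this equation. Since $\sum_{\mathbf{x}} w_{j,n}(\mathbf{x})=1$, the equation becomes
\[
-n{\bf J}_1(\bm{\theta}_{j,n}^*)\Bigl\{\bm{\theta}_{j,n}^*-\sum_{\mathbf{x}\in P_{j,n}^*} w_{j,n}(\mathbf{x})\hat{\bm{\theta}}(\mathbf{x})\Bigr\}+\bar{\mathbf{r}}_{j,n}=\mathbf{0},
\qquad
\bar{\mathbf{r}}_{j,n}=\sum_{\mathbf{x}\in P_{j,n}^*} w_{j,n}(\mathbf{x})\mathbf{r}_{j,n}(\mathbf{x}).
\]
Two facts justify this step. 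First, the weighted average $\sum w_{j,n}\hat{\bm{\theta}}$ is well defined. To see this, I would use \ref{ass:A7}: the MLEs in the cell lie within $O(n^{-1/2})$ of a point of $K$ except on a set of small conditional mass. I would also use the linearisation itself: it forces the sum of $w_{j,n}\,n{\bf J}_1(\hat{\bm{\theta}}-\bm{\theta}^*)$ to converge whenever the score sum does. Second, the remainder average satisfies $\|\bar{\mathbf{r}}_{j,n}\|\le\sup_{\mathbf{x}\in P_{j,n}^*}\|\mathbf{r}_{j,n}(\mathbf{x})\|=o(\sqrt n)$, because it is a convex combination.

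Finally I would solve the equation for $\bm{\theta}_{j,n}^*$. By \eqref{eqn:fisherbound}, ${\bf J}_1(\bm{\theta}_{j,n}^*)\succeq c_1(K){\bf I}_p$, so $\|{\bf J}_1(\bm{\theta}_{j,n}^*)^{-1}\|\le c_1(K)^{-1}$ uniformly over cells whose codepoint lies in $K$. This gives
\[
\bm{\varepsilon}_{j,n}=\frac1n{\bf J}_1(\bm{\theta}_{j,n}^*)^{-1}\bar{\mathbf{r}}_{j,n},
\qquad
\|\bm{\varepsilon}_{j,n}\|\le\frac{o(\sqrt n)}{c_1(K)\,n}=o(n^{-1/2}).
\]
The restatement as $\mathbb{E}_{r_n}[\hat{\bm{\theta}}(\mathbf{X}_n)\mid\mathbf{X}_n\in P_{j,n}^*]$ is just the definition of $w_{j,n}$ as the conditional marginal $\bar r_{j,n}$.

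The main obstacle is the first step, namely rigorously justifying the interior stationarity condition and the interchange of gradient and sum over a countably infinite cell. I would handle it by appealing to \ref{ass:A6} directly. As a backup, I would use the integrability implied by the uniform bound on $\mathbf{r}_{j,n}$ together with \ref{ass:A7}'s tightness, which dominates the summands in a neighbourhood of $\bm{\theta}_{j,n}^*$. Once that step is secured, the rest is linear algebra with uniform constants on $K$.
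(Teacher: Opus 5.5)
Your proposal is correct and follows essentially the same route as the paper: the interior first-order condition for the cellwise expected log-likelihood, substitution of the score linearisation, division by $nq^*_{j,n}$, and inversion of ${\bf J}_1(\bm{\theta}^*_{j,n})$ using the uniform lower bound in \eqref{eqn:fisherbound}. Your extra remarks only make explicit steps the paper leaves implicit. These are bounding the averaged remainder by its supremum (as a convex combination) and using the linearisation to show that $\sum_{\mathbf{x}} w_{j,n}(\mathbf{x})\hat{\bm{\theta}}(\mathbf{x})$ is well defined.
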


\begin{proof}
For a fixed cell $j$, define the cellwise expected log-likelihood
\begin{align}
\label{eqn:thm2:Ljn}
L_{j,n}(\bm{\theta})
=
\sum_{\mathbf{x}\in P_{j,n}^*}
r_n(\mathbf{x})\,
\ell_n(\mathbf{x}|\bm{\theta}).
\end{align}
By definition, $\bm{\theta}_{j,n}^*$ maximises $L_{j,n}(\bm{\theta})$. Since $\Theta$ is open \ref{ass:A1} and $\bm{\theta}_{j,n}^*\in K\subset\Theta$ by hypothesis, the~first-order condition holds:
%
\begin{align}
\label{eqn:thm2:first:order}
0
=
\left.
\nabla_{\bm{\theta}}L_{j,n}(\bm{\theta})
\right|_{\bm{\theta}=\bm{\theta}_{j,n}^*}
=
\sum_{\mathbf{x}\in P_{j,n}^*}
r_n(\mathbf{x})\,
\nabla_{\bm{\theta}}
\ell_n(\mathbf{x}|\bm{\theta}_{j,n}^*).
\end{align}
For each \(\mathbf{x}\in P_{j,n}^*\), the~score linearisation gives
\vspace{6pt}
\[
\nabla_{\bm{\theta}}\ell_n(\mathbf{x}\mid\bm{\theta}_{j,n}^*)
=
-n{\bf J}_1(\bm{\theta}_{j,n}^*)
\{\bm{\theta}_{j,n}^*-\hat{\bm{\theta}}(\mathbf{x})\}
+
\mathbf{r}_{j,n}(\mathbf{x}),
\]
where
\[
\sup_{\mathbf{x}\in P_{j,n}^*}
\|\mathbf{r}_{j,n}(\mathbf{x})\|
=
o(\sqrt n).
\]
Substituting this into the first-order condition
\[
0=\sum_{{\bf x}\in P^*_{j,n}} r_n({\bf x})\nabla_{\bm{\theta}} \ell_n({\bf x} \mid \bm{\theta}^*_{j,n})
\]
and dividing by \(nq^*_{j,n}\) yields
\[
{\bf J}_1(\bm{\theta}^*_{j,n})
\left(
\bm{\theta}^*_{j,n}
-
\sum_{{\bf x}\in P^*_{j,n}} w_{j,n}({\bf x})\hat{\bm{\theta}}({\bf x})
\right)
=
o(n^{-1/2}),
\]
where \(w_{j,n}({\bf x})=r_n({\bf x})/q^*_{j,n}\). Since \({\bf J}_1(\bm{\theta}^*_{j,n})\) is uniformly non-singular on compact subsets, we obtain
\[
\bm{\theta}^*_{j,n}
=
\sum_{{\bf x}\in P^*_{j,n}} w_{j,n}({\bf x})\hat{\bm{\theta}}({\bf x})
+
o(n^{-1/2}),
\]
as claimed.
\end{proof}
\section{Consistency}
\label{sec:consistency}
The preceding section shows that, in~the high-resolution regime of \ref{ass:A7}, SMML behaves locally as an entropy-weighted Fisher--Rao quantisation of the statistical model. 
%
The selected SMML codepoint remains within the same local \(n^{-1/2}\)-scale neighbourhood as the MLE.
Consequently, replacing the continuous MLE by the finite SMML codepoint perturbs estimation only at the usual parametric
scale.

\begin{theorem}[Consistency and rate of convergence of the SMML estimator]
\label{thm:smml:consistency}
Let
\[
\mathcal{P}_n^*
=
\{P_{j,n}^*\}_{j=1}^{k_n},
\qquad
\bm{\Theta}_n^*
=
\{\bm{\theta}_{j,n}^*\}_{j=1}^{k_n}
\]
denote an optimal SMML partition and its associated codepoints. Define the SMML estimator by
\[
\hat{\bm{\theta}}_n^{\mathrm{SMML}}(\mathbf{x})
=
\sum_{j=1}^{k_n}
\bm{\theta}_{j,n}^*
\mathds{1}_{\{\mathbf{x}\in P_{j,n}^*\}}.
\]
Let
\[
\hat{\bm{\theta}}_n
=
\hat{\bm{\theta}}(\mathbf{X}_n)
\]
\textls[-15]{denote the MLE, and~suppose that the true parameter is \(\bm{\theta}_0\in\operatorname{int}(\Theta)\).
%
Under~\ref{ass:A1}--\ref{ass:A7}, fix compact sets \(K_0\Subset K_1\Subset\Theta\), such that \(\theta_0\in \operatorname{int}(K_0)\) and (A5) holds for \(K_1\). Assume, in~addition, that the selected SMML codepoint lies in \(K_1\) with a probability tending to one whenever the MLE lies in \(K_0\), that is,}
\[
P\left(
\hat\theta_n\in K_0,\,
\hat\theta^{\mathrm{SMML}}_n\notin K_1
\right)\to 0.
\]
Then
\[
\hat{\bm{\theta}}_n^{\mathrm{SMML}}
\xrightarrow{p}
\bm{\theta}_0,
\qquad n\to\infty.
\]
Moreover,
\begin{align}
\label{eqn:smml:rate}
\left\|
\hat{\bm{\theta}}_n^{\mathrm{SMML}}
-
\bm{\theta}_0
\right\|
=
O_p(n^{-1/2}).
\end{align}
\end{theorem}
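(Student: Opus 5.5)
The proof rests on the triangle inequality
\[
\|\hat{\bm{\theta}}_n^{\mathrm{SMML}}-\bm{\theta}_0\|\le \|\hat{\bm{\theta}}_n^{\mathrm{SMML}}-\hat{\bm{\theta}}_n\|+\|\hat{\bm{\theta}}_n-\bm{\theta}_0\|.
\]
By \ref{ass:A4} the MLE satisfies \(\|\hat{\bm{\theta}}_n-\bm{\theta}_0\|=O_p(n^{-1/2})\). So it is enough to show that the quantisation error \(\|\hat{\bm{\theta}}_n^{\mathrm{SMML}}-\hat{\bm{\theta}}_n\|\) is \(O_p(n^{-1/2})\); consistency then follows immediately from the rate. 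The main device is to restrict to a good event on which everything lives in the compact set \(K_1\). There the Fisher--Rao distance and the Euclidean distance are comparable, and \ref{ass:A7} gives the local \(n^{-1/2}\) tightness.

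\textbf{Step 1 (good event).} Let
\[
E_n=\{\hat{\bm{\theta}}_n\in K_0,\ \hat{\bm{\theta}}_n^{\mathrm{SMML}}\in K_1\}.
\]
Since \(\bm{\theta}_0\in\operatorname{int}(K_0)\) and the MLE is consistent, \(P(\hat{\bm{\theta}}_n\in K_0)\to1\). Combined with the extra hypothesis \(P(\hat{\bm{\theta}}_n\in K_0,\hat{\bm{\theta}}^{\mathrm{SMML}}_n\notin K_1)\to0\), this gives \(P(E_n)\to1\). It therefore suffices to bound the error on \(E_n\).

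\textbf{Step 2 (metric comparison).} On \(K_1\), \eqref{eqn:fisherbound} together with the local expansion \eqref{eqn:FRlocal} gives
\[
\|\bm{\theta}_1-\bm{\theta}_2\|\le C\,d_F(\bm{\theta}_1,\bm{\theta}_2)
\]
for nearby \(\bm{\theta}_1,\bm{\theta}_2\in K_1\), with \(C\) of order \(c_1(K_1)^{-1/2}\). This is a geodesic-length argument: any curve of Fisher--Rao length \(L\) that stays in a neighbourhood where \(\mathbf{J}_1\succeq c_1\mathbf{I}_p\) has Euclidean length at most \(L/\sqrt{c_1}\).

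The delicate point is that a Fisher--Rao geodesic might leave \(K_1\). To handle this I will use \(K_0\Subset K_1\). If \(d_F\) is small, so is the Euclidean distance, by local uniformity near \(K_0\). Once \(d_F=O_p(n^{-1/2})\) has been established, the pair stays inside a Euclidean neighbourhood of \(K_0\) contained in \(K_1\) with probability tending to one.

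\textbf{Step 3 (unconditioning \ref{ass:A7}).} This is the main obstacle, because \ref{ass:A7} is stated conditionally on each cell, uniformly over cells with \(\bm{\theta}^*_{j,n}\in K\) (take \(K=K_1\)). Fix \(\epsilon>0\) and choose \(M\) as in \ref{ass:A7}. Decompose over the cells:
\[
P\bigl(E_n,\ \sqrt n\,d_F(\hat{\bm{\theta}}_n,\hat{\bm{\theta}}^{\mathrm{SMML}}_n)>M\bigr)
\le\sum_{j:\bm{\theta}^*_{j,n}\in K_1}P(\mathbf{X}_n\in P^*_{j,n})\,
P\bigl(\sqrt n\,d_F(\hat{\bm{\theta}}_n,\bm{\theta}^*_{j,n})>M\mid \mathbf{X}_n\in P^*_{j,n}\bigr)\le\epsilon.
\]
The sum is over a finite (\(k_n\)) or countable collection of cells, and the cell probabilities sum to at most one. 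The only subtlety is the measure: \ref{ass:A7} conditions on \(\mathbf{X}_n\in P^*_{j,n}\), and I read it under the same law as the consistency statement, the sampling law at \(\bm{\theta}_0\). If instead it is meant under \(r_n\), I would transfer it by contiguity. Under LAN, \(p_n(\cdot\mid\bm{\theta}_0)\) restricted to \(n^{-1/2}\)-neighbourhoods is mutually contiguous with the suitably localised marginal, since the prior is positive and continuous near \(\bm{\theta}_0\) by \ref{ass:A3}. This transfer is the step I expect to need the most care, and I would state the measure convention explicitly.

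\textbf{Step 4 (conclusion).} Steps 1--3 give \(d_F(\hat{\bm{\theta}}_n,\hat{\bm{\theta}}^{\mathrm{SMML}}_n)=O_p(n^{-1/2})\) on \(E_n\). Step 2 converts this to \(\|\hat{\bm{\theta}}_n^{\mathrm{SMML}}-\hat{\bm{\theta}}_n\|=O_p(n^{-1/2})\). Adding the MLE rate through the triangle inequality yields \eqref{eqn:smml:rate}, and since \(O_p(n^{-1/2})=o_p(1)\), consistency follows.
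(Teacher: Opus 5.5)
Your proposal is correct and follows the paper's proof step for step: the good event, (A7) for an $O_p(n^{-1/2})$ Fisher--Rao bound, Fisher--Rao/Euclidean equivalence on $K_1$ from (A5), and then the triangle inequality; your cell-by-cell unconditioning of (A7) and your geodesic-exit argument using $K_0\Subset K_1$ spell out what the paper asserts in a single line. The paper likewise applies (A7) directly under the sampling law, as in your primary reading, but your contiguity fallback needs more than stated: $r_n$ gives vanishing mass to $n^{-1/2}$-neighbourhoods of $\bm{\theta}_0$ and so is not contiguous with $p_n(\cdot\mid\bm{\theta}_0)$, while a localised marginal that is contiguous is not the law under which an $r_n$-version of (A7) conditions on cells.
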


\begin{proof}
Write
\[
\hat{\bm{\theta}}_n^{\mathrm{SMML}}
:=
\hat{\bm{\theta}}_n^{\mathrm{SMML}}(\mathbf{X}_n),
\qquad
\hat{\bm{\theta}}_n
:=
\hat{\bm{\theta}}(\mathbf{X}_n).
\]
By standard likelihood theory under \ref{ass:A1}--\ref{ass:A6},
\[
\hat{\bm{\theta}}_n\stackrel{p}{\to}\bm{\theta}_0,
\qquad
\|\hat{\bm{\theta}}_n-\bm{\theta}_0\|=O_p(n^{-1/2}).
\]
Since \(\bm{\theta}_0\in \operatorname{int}(K_0)\), it follows that
\[
P(\hat{\bm{\theta}}_n\in K_0)\to 1.
\]
By the additional compact-localisation assumption,
\[
P\left(
\hat{\bm{\theta}}_n\in K_0,\,
\hat{\bm{\theta}}^{\mathrm{SMML}}_n\in K_1
\right)\to 1.
\]
%
On this event, \ref{ass:A7} gives
\[
d_F\!\left(\hat{\bm{\theta}}_n,\hat{\bm{\theta}}_n^{\mathrm{SMML}}\right)
=
O_p(n^{-1/2}).
\]
By \ref{ass:A5}, the~Fisher--Rao and Euclidean metrics are locally equivalent   on \(K_1\). Therefore,
\[
\|\hat{\bm{\theta}}^{\mathrm{SMML}}_n-\hat{\bm{\theta}}_n\|=O_p(n^{-1/2}).
\]
Hence,
\[
\|\hat{\bm{\theta}}_n^{\mathrm{SMML}}-\bm{\theta}_0\|
\le
\|\hat{\bm{\theta}}_n^{\mathrm{SMML}}-\hat{\bm{\theta}}_n\|
+
\|\hat{\bm{\theta}}_n-\bm{\theta}_0\|
=
O_p(n^{-1/2}),
\]
which proves \eqref{eqn:smml:rate}. Since \(O_p(n^{-1/2})=o_p(1)\),
consistency follows.
\end{proof}

\begin{remark}[Interpretation]
{Theorem~\ref{thm:smml:consistency} shows that, under~the high-resolution
regime of \mbox{\ref{ass:A7}}, replacing the continuous MLE by the
selected SMML codepoint introduces only an \(O_p(n^{-1/2})\) perturbation.}
\end{remark}
\section{SMML and Exponential~Families}
\label{sec:expfam}
We now specialise the general SMML framework to regular exponential family models. In~this setting, the~entropy--cross-entropy structure of the SMML objective has an explicit information-geometric form. Exponential families are dually flat statistical manifolds. The~Fisher information defines the Riemannian metric, natural and expectation parameters provide dual affine coordinates, and~KL divergence is the Bregman divergence generated by the log-partition function~\cite{Amari16}. Consequently, SMML codepoints and cells can be described in terms of KL projections, Bregman centroids, and~affine Voronoi-type~partitions.

The closest SMML-specific prior work is Dowty~\cite{Dowty13},
who studied exponential families with continuous sufficient statistics and showed that SMML regions are convex polytopes determined by the assertions and coding probabilities. The~results below give the corresponding countable-data formulation and connect it to the asymptotic Fisher--Rao and KL-projection framework developed in the preceding~sections.

We consider models whose joint likelihood for a data set \(\mathbf{x}\) of size \(n\) has the form
\begin{align}
\label{eqn:expfam:model}
p_n(\mathbf{x}\mid \bm{\theta})
=
h(\mathbf{x})
\exp\!\left\{
\eta(\bm{\theta})^{\prime}T(\mathbf{x})
-
A_n\bigl(\eta(\bm{\theta})\bigr)
\right\},
\end{align}
where \(T(\mathbf{x})\in\mathbb{R}^d\) is a sufficient statistic, \(\eta(\bm{\theta})\in\mathbb{R}^d\) is the natural parameter, and~\(A_n\) is the log-partition function for the full sample. The~model is canonical when \(\eta(\bm{\theta})=\bm{\theta}\).

\subsection{SMML Codepoints as KL/Bregman~Centroids}
For a fixed SMML cell \(P_j\), the~associated codepoint satisfies
\begin{align}
\label{eqn:expfam:smml:def}
\bm{\theta}_j^*
=
\argmax_{\bm{\theta}\in\Theta}
\sum_{\mathbf{x}\in P_j}
r_n(\mathbf{x})\log p_n(\mathbf{x}\mid\bm{\theta}).
\end{align}
Substituting~\eqref{eqn:expfam:model} and dropping the term involving \(\log h(\mathbf{x})\), which does not depend on \(\bm{\theta}\), gives
\begin{align}
\label{eqn:expfam:objective}
\bm{\theta}_j^*
=
\argmax_{\bm{\theta}\in\Theta}
\left\{
\eta(\bm{\theta})^{\prime}S_j
-
q_j A_n\bigl(\eta(\bm{\theta})\bigr)
\right\},
\end{align}
where
\[
S_j
=
\sum_{\mathbf{x}\in P_j}
r_n(\mathbf{x})T(\mathbf{x}),
\qquad
q_j
=
\sum_{\mathbf{x}\in P_j}r_n(\mathbf{x}) > 0.
\]
The first-order condition is
\begin{align}
\label{eqn:expfam:foc:general}
[D\eta(\bm{\theta}_j^*)]^{\prime}
\left\{
S_j
-
q_j\nabla_{\eta}A_n\bigl(\eta(\bm{\theta}_j^*)\bigr)
\right\}
=
0.
\end{align}
In the canonical case, this reduces to
\begin{align}
\label{eqn:expfam:canonical:foc}
\nabla A_n(\bm{\theta}_j^*)
=
\frac{S_j}{q_j}.
\end{align}
Since \(\nabla A_n(\bm{\theta})=\mathbb{E}_{\bm{\theta}}[T(\mathbf{X})]\), this is a moment-matching condition. The~expectation parameter of the SMML codepoint equals the \(r_n\)-weighted average of the sufficient statistic over the~cell.

\begin{proposition}[SMML codepoints in canonical exponential families]
\label{prop:smml:expfam}
Suppose that~\eqref{eqn:expfam:model} is a regular canonical exponential family, and~suppose that the cellwise mean \(S_j/q_j\) lies in the interior of the convex support of the sufficient statistic \(T\), so that the maximiser in \eqref{eqn:expfam:smml:def} exists, is unique, and~lies in \(\Theta\). Then, for~each SMML cell \(P_j\),
%
\begin{align}
\label{eqn:expfam:meanvalue}
\mathbb{E}_{\bm{\theta}_j^*}[T(\mathbf{X})]
=
\sum_{\mathbf{x}\in P_j}
w_j(\mathbf{x})T(\mathbf{x}),
\qquad
w_j(\mathbf{x})
=
\frac{r_n(\mathbf{x})}{q_j}.
\end{align}
Thus, the expectation parameter associated with the SMML codepoint is the cellwise \(r_n\)-weighted average of the sufficient statistic.
\end{proposition}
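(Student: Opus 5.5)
The plan is to work directly with the cellwise objective in \eqref{eqn:expfam:objective} in the canonical case $\eta(\bm{\theta})=\bm{\theta}$. Specifically, I would study the function
\[
G_j(\bm{\theta}) = \bm{\theta}^{\prime}S_j - q_j A_n(\bm{\theta}),
\]
which differs from $\sum_{\mathbf{x}\in P_j} r_n(\mathbf{x})\log p_n(\mathbf{x}\mid\bm{\theta})$ only by the $\bm{\theta}$-free term $\sum_{\mathbf{x}\in P_j} r_n(\mathbf{x})\log h(\mathbf{x})$. Before dropping that term I would check that it is finite, or else argue directly with differences of the objective. This reduction requires exchanging the sum over the countable cell with the linear functional in $\bm{\theta}$. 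That exchange is legitimate because $S_j$ is well defined: the hypothesis that $S_j/q_j$ lies in the interior of the convex support presupposes finite first moments of $T$ under $\bar r_{j,n}$.

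\textbf{Step 1: concavity.} In a regular canonical exponential family, $A_n$ is strictly convex and $C^\infty$ on the open natural parameter space $\Theta$. It is strictly convex because the family is minimal, so $\nabla^2 A_n(\bm{\theta})=\operatorname{Cov}_{\bm{\theta}}[T]\succ 0$. Since $q_j>0$, it follows that $G_j$ is strictly concave and smooth on $\Theta$.

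\textbf{Step 2: existence and interior location.} By hypothesis the maximiser $\bm{\theta}_j^*$ exists, is unique, and lies in the open set $\Theta$. For completeness I would recall why: the interior-mean condition is equivalent to $S_j/q_j\in\nabla A_n(\Theta)$, because the mean map of a regular family is a diffeomorphism onto the interior of the convex support. In that case $G_j$ is coercive, and its supremum is attained in the interior.

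\textbf{Step 3: first-order condition.} At an interior maximiser of a differentiable function the gradient vanishes, so $S_j - q_j\nabla A_n(\bm{\theta}_j^*)=0$, which is \eqref{eqn:expfam:canonical:foc}. Dividing by $q_j$ and using the standard identity $\nabla A_n(\bm{\theta})=\mathbb{E}_{\bm{\theta}}[T(\mathbf{X})]$ gives \eqref{eqn:expfam:meanvalue} with $w_j=r_n/q_j$. That identity follows from differentiating $\sum_{\mathbf{x}} h(\mathbf{x})e^{\bm{\theta}^{\prime}T(\mathbf{x})}=e^{A_n(\bm{\theta})}$ under the sum, which is justified by \ref{ass:A6} or by standard exponential-family theory on the interior of the natural parameter space.

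The only delicate point I anticipate is Step 2, namely the identification of the range of the mean map $\nabla A_n$ with the interior of the convex support. That identification is what guarantees that the moment equation has a solution in $\Theta$. Since the proposition assumes existence and uniqueness outright, I would cite the classical result (Barndorff-Nielsen; Brown) rather than reprove it. With that in hand, the remainder is the one-line first-order calculation. Finally, I would note that strict concavity also shows the converse: any $\bm{\theta}$ satisfying the moment-matching equation is the codepoint. This gives the KL/Bregman-centroid interpretation.
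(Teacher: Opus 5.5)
Your proposal is correct and matches the paper's proof. In the canonical case you set the gradient of $\bm{\theta}^{\prime}S_j-q_jA_n(\bm{\theta})$ to zero at the interior maximiser and then use $\nabla A_n(\bm{\theta})=\mathbb{E}_{\bm{\theta}}[T(\mathbf{X})]$, exactly as the paper does, and your extra checks (finiteness of the dropped $\log h$ term, strict concavity, and the converse via uniqueness) are sound refinements the paper leaves implicit; strict convexity of $A_n$ uses minimality, which ``regular'' does not strictly include, but nothing depends on it because existence and uniqueness are assumed.
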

\begin{proof}
In the canonical case, \(\eta(\bm{\theta})=\bm{\theta}\), so the first-order condition in~\eqref{eqn:expfam:foc:general} reduces to
\[
\nabla A_n(\bm{\theta}_j^*)=\frac{S_j}{q_j}.
\]
Since
\[
S_j=\sum_{\mathbf{x}\in P_j} r_n(\mathbf{x})T(\mathbf{x}),
\qquad
q_j=\sum_{\mathbf{x}\in P_j} r_n(\mathbf{x}),
\]
we obtain
\[
\frac{S_j}{q_j}
=
\sum_{\mathbf{x}\in P_j}
\frac{r_n(\mathbf{x})}{q_j}\,T(\mathbf{x})
=
\sum_{\mathbf{x}\in P_j} w_j(\mathbf{x})T(\mathbf{x}).
\]
Finally, for~a regular canonical exponential family,
\[
\nabla A_n(\bm{\theta})=\mathbb{E}_{\bm{\theta}}[T(\mathbf{X})],
\]
which yields~\eqref{eqn:expfam:meanvalue}.
\end{proof}

The moment-matching condition in~\eqref{eqn:expfam:canonical:foc} is the standard characterisation of the KL (or I-) projection onto a regular exponential family~\cite{Csiszar75}. It is also the countable-data analogue of Dowty's centroid condition for SMML estimators with continuous sufficient statistics~\cite{Dowty13}. Since KL divergence in a canonical exponential family is the Bregman divergence generated by \(A_n\), Proposition~\ref{prop:smml:expfam} identifies SMML codepoints as KL/Bregman centroids in expectation coordinates, connecting the result to Bregman clustering theory~\cite{BanerjeeEtAl05}. Unlike standard Bregman clustering, however, the~cells and weights here arise from the SMML coding~criterion.
In general, the~natural parameter itself need not be an affine average of the corresponding MLEs. Such an interpretation holds only in special cases where the inverse mean map is affine and the MLE depends affinely on the sufficient~statistic.

\subsection{Polyhedral Structure of SMML~Cells}
The Fisher--Rao Voronoi geometry of Section~\ref{sec:fisherrao} is an asymptotic local result for regular parametric models. In~exponential families, an~exact finite-sample geometry is available because the log-likelihood is affine in the sufficient statistic. The~next result shows that exact SMML cells are pullbacks of convex polyhedra in sufficient-statistic
space. 

\begin{theorem}[Polyhedral SMML cells in sufficient-statistic space]
\label{thm:smml:polyhedra}
Let \(\mathcal{P}^*=\{P_1^*,\ldots,P_k^*\}\) be an optimal SMML partition for the exponential family~\eqref{eqn:expfam:model}, with~associated codepoints   \(\bm{\Theta}^*=\{\bm{\theta}_1^*,\ldots,\bm{\theta}_k^*\}\) and assertion probabilities \(q_1^*,\ldots,q^*_k\). For~each \(j\), define
\begin{align}
\label{eqn:expfam:region}
\mathcal{V}_j
=
\bigcap_{\ell\neq j}
\left\{
\mathbf{t}\in\mathbb{R}^d:
\bigl(\eta(\bm{\theta}_j^*)-\eta(\bm{\theta}_\ell^*)\bigr)^{\prime}\mathbf{t}
\ge
\log q_\ell^*-\log q_j^*
+
A_n\bigl(\eta(\bm{\theta}_j^*)\bigr)
-
A_n\bigl(\eta(\bm{\theta}_\ell^*)\bigr)
\right\}.
\end{align}
Then each \(\mathcal{V}_j\) is a convex polyhedron in \(\mathbb{R}^d\), and,
up to ties on boundaries,
\[
P_j^*
=
\{\mathbf{x}\in\mathcal{X}_n:T(\mathbf{x})\in\mathcal{V}_j\}.
\]
\end{theorem}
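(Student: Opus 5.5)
The plan is to argue in two layers: first, that for a fixed optimal codebook, optimality of $\mathcal{P}^*$ forces each data set to be assigned to an assertion minimising its own two-part codelength; second, that in the exponential family this pointwise assignment rule is an affine comparison in $T(\mathbf{x})$.

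\textbf{Step 1: optimal assignment is pointwise.} Hold the codepoints $\bm{\theta}_1^*,\ldots,\bm{\theta}_k^*$ and the probabilities $q_1^*,\ldots,q_k^*$ fixed. For any partition $\mathcal{P}$ with assertions $j(\mathbf{x})$, consider the surrogate objective
\[
F(\mathcal{P};\bm{\Theta}^*,Q^*)=\sum_{\mathbf{x}} r_n(\mathbf{x})\bigl\{-\log q_{j(\mathbf{x})}^*-\log p_n(\mathbf{x}\mid\bm{\theta}_{j(\mathbf{x})}^*)\bigr\}.
\]
It has the following properties.
\begin{enumerate}[label=(\roman*)]
\item $F(\mathcal{P}^*;\bm{\Theta}^*,Q^*)=\mathcal{I}(\mathcal{P}^*)$.
\item For any $\mathcal{P}$ with true cell masses $q_j$, Gibbs' inequality gives $-\sum_j q_j\log q_j^*\ge -\sum_j q_j\log q_j$.
\item The codepoint optimality in \eqref{eqn:smml:estimate} means that re-optimising codepoints for $\mathcal{P}$ cannot increase the detail term.
\end{enumerate}
Together these give $\mathcal{I}(\mathcal{P})\le F(\mathcal{P};\bm{\Theta}^*,Q^*)$.

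Now suppose some $\mathbf{x}_0\in P_j^*$ with $r_n(\mathbf{x}_0)>0$ had a strictly smaller codelength $\Lambda_\ell(\mathbf{x}_0)<\Lambda_j(\mathbf{x}_0)$, where $\Lambda_j(\mathbf{x})=-\log q_j^*-\log p_n(\mathbf{x}\mid\bm{\theta}_j^*)$. Move $\mathbf{x}_0$ to $P_\ell$ to obtain $\mathcal{P}'$. This strictly lowers $F$, so $\mathcal{I}(\mathcal{P}')<\mathcal{I}(\mathcal{P}^*)$, contradicting optimality. Two caveats apply: this presumes $\mathcal{P}'$ is admissible in $\Pi$, and that moving $\mathbf{x}_0$ does not empty $P_j$. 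If it does empty $P_j$, the objective only improves further by dropping the cell. Hence every $\mathbf{x}$ lies in a cell $j$ minimising $\Lambda_j(\mathbf{x})$, up to ties and $r_n$-null points. By \ref{ass:A3}, $r_n>0$ everywhere on the common support.

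\textbf{Step 2: the comparison is affine in $T(\mathbf{x})$.} Substitute \eqref{eqn:expfam:model}. The term $\log h(\mathbf{x})$ is common to all $j$ and cancels. The condition $\Lambda_j(\mathbf{x})\le\Lambda_\ell(\mathbf{x})$ becomes
\[
\log q_j^*+\eta(\bm{\theta}_j^*)^{\prime}T(\mathbf{x})-A_n(\eta(\bm{\theta}_j^*))\ge \log q_\ell^*+\eta(\bm{\theta}_\ell^*)^{\prime}T(\mathbf{x})-A_n(\eta(\bm{\theta}_\ell^*)).
\]
Rearranged, this is exactly the half-space inequality in \eqref{eqn:expfam:region} with $\mathbf{t}=T(\mathbf{x})$. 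Therefore $\mathbf{x}$ is assigned to $j$ if and only if $T(\mathbf{x})\in\mathcal{V}_j$, up to boundary ties. This gives $P_j^*=T^{-1}(\mathcal{V}_j)\cap\mathcal{X}_n$.

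\textbf{Step 3: convexity.} Each $\mathcal{V}_j$ is a finite intersection of $k-1$ closed half-spaces; when $\eta(\bm{\theta}_j^*)=\eta(\bm{\theta}_\ell^*)$ the corresponding constraint is either all of $\mathbb{R}^d$ or empty. It is therefore a convex polyhedron.

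The main obstacle is Step 1, specifically the admissibility of single-point reassignments within the prescribed class $\Pi$. If $\Pi$ is restricted, the result must be read as holding for $\Pi$ closed under such moves, for example all finite partitions. A secondary issue is points of zero marginal mass, whose assignment is irrelevant to $\mathcal{I}$ and is absorbed into the ``up to ties'' qualifier.
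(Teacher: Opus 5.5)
Your proposal is correct and follows the paper's proof: you reduce cell membership to pointwise minimisation of $\Lambda_j(\mathbf{x})=-\log q_j^*-\log p_n(\mathbf{x}\mid\bm{\theta}_j^*)$, cancel $\log h(\mathbf{x})$ to obtain affine half-space conditions in $T(\mathbf{x})$, and intersect these half-spaces. The paper simply asserts that the optimal partition assigns each $\mathbf{x}$ to a minimiser of $\Lambda_j$. Your Step~1 (the surrogate objective, Gibbs' inequality and codepoint re-optimisation, giving $\mathcal{I}(\mathcal{P}')\le F(\mathcal{P}')<\mathcal{I}(\mathcal{P}^*)$) and your treatment of the degenerate case $\eta(\bm{\theta}_j^*)=\eta(\bm{\theta}_\ell^*)$ supply details the paper leaves implicit, without changing the route.
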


\begin{proof}
For a fixed codebook and assertion probabilities, assigning \(\mathbf{x}\) to assertion \(j\)  incurs the codelength
\[
\Lambda_j(\mathbf{x})
=
-\log q_j^*
-
\log p_n(\mathbf{x}\mid\bm{\theta}_j^*).
\]
The optimal partition assigns \(\mathbf{x}\) to an index minimising \(\Lambda_j(\mathbf{x})\), up~to ties. Thus, \(\mathbf{x}\in P_j^*\) precisely when
\[
\Lambda_j(\mathbf{x})\le \Lambda_\ell(\mathbf{x})
\qquad
\text{for all } \ell\neq j.
\]
Substituting the exponential-family representation and cancelling the common base-measure term \(\log h(\mathbf{x})\), this inequality becomes
\[
\bigl(\eta(\bm{\theta}_j^*)-\eta(\bm{\theta}_\ell^*)\bigr)^\prime
T(\mathbf{x})
\ge
\log q_\ell^* - \log q_j^*
+
A_n\bigl(\eta(\bm{\theta}_j^*)\bigr)
-
A_n\bigl(\eta(\bm{\theta}_\ell^*)\bigr).
\]
For fixed \(j\) and \(\ell\), this is a closed half-space in
sufficient-statistic space. Intersecting these half-spaces over all
\(\ell\neq j\) gives \(\mathcal{V}_j\). Hence, \(\mathcal{V}_j\) is a convex
polyhedron. Since the membership condition depends on \(\mathbf{x}\) only
through \(T(\mathbf{x})\), the~SMML cell is the pullback
\[
P_j^*
=
\{\mathbf{x}\in\mathcal{X}_n:T(\mathbf{x})\in\mathcal{V}_j\},
\]
up to boundary ties.
\end{proof}

Unlike Theorems \ref{thm:smml:fr:equivalence}--\ref{thm:smml:consistency}, Theorem~\ref{thm:smml:polyhedra} is an exact finite-sample result for exponential families. It follows directly from affine comparisons of two-part codelengths in sufficient-statistic space and does not rely on the high-resolution Assumption \ref{ass:A7}.

Figure~\ref{fig:smml:polyhedral} illustrates the exact exponential-family structure in Theorem~\ref{thm:smml:polyhedra} where the pairwise SMML codelength comparisons become affine boundaries in sufficient-statistic space, and~the resulting data-space cells are pullbacks under the sufficient-statistic map. Theorem~\ref{thm:smml:polyhedra} is the countable-data analogue of Dowty's polytope theorem for SMML estimators with continuous sufficient statistics~\cite{Dowty13}. In~both settings, pairwise comparisons of two-part codelengths become affine inequalities in sufficient-statistic space; the assertion probabilities provide the offsets, while differences between natural parameters determine the normal vectors of the separating hyperplanes. This affine structure also agrees with the general geometry of Bregman Voronoi diagrams, where nearest-codepoint comparisons for Bregman divergences become affine inequalities in suitable dual coordinates~\cite{BoissonnatEtAl10}.

\begin{figure}
\begin{center}
\begin{tikzpicture}[
    >=Latex,
    font=\small,
    axis/.style={->, line width=0.8pt, black!75},
    boundary/.style={draw=black!65, dashed, line width=0.9pt},
    panel/.style={draw=black!70, line width=0.7pt, rounded corners=2pt},
    statpoint/.style={circle, fill=black!45, inner sep=1.15pt},
    centroid/.style={circle, fill=black, draw=white, line width=0.4pt, inner sep=2.0pt},
    smalllab/.style={font=\scriptsize},
    lab/.style={font=\small}
]

\def\xmin{0}
\def\xmax{6.4}
\def\ymin{0}
\def\ymax{4.2}

\begin{scope}
\clip (\xmin,\ymin) rectangle (\xmax,\ymax);

\fill[blue!12]
    (\xmin,\ymin) -- (2.15,\ymin) -- (2.55,\ymax) -- (\xmin,\ymax) -- cycle;

\fill[green!12]
    (2.15,\ymin) -- (\xmax,\ymin) -- (\xmax,\ymax) -- (3.85,\ymax)
    -- (2.55,\ymax) -- cycle;

\fill[orange!18]
    (2.15,\ymin) -- (3.85,\ymax) -- (2.55,\ymax) -- cycle;

\draw[boundary] (2.15,\ymin) -- (2.55,\ymax);
\draw[boundary] (2.15,\ymin) -- (3.85,\ymax);

\end{scope}

\draw[panel] (\xmin,\ymin) rectangle (\xmax,\ymax);

\draw[axis] (-0.15,\ymin) -- (\xmax+0.25,\ymin)
    node[right] {$T_1(\mathbf{x})$};
\draw[axis] (\xmin,-0.15) -- (\xmin,\ymax+0.25)
    node[above] {$T_2(\mathbf{x})$};

\foreach \x/\y in {
0.55/0.55, 0.80/1.25, 1.05/2.05, 1.35/3.00,
1.55/0.75, 1.75/1.65,
2.35/0.85, 2.55/3.05, 2.65/1.75, 3.10/3.45, 3.35/3.20,
3.65/0.55, 3.95/1.10, 4.30/1.85, 4.70/2.65,
5.10/0.70, 5.35/1.55, 5.70/2.45, 5.95/3.25
}
{
\node[statpoint] at (\x,\y) {};
}

\node[centroid] (c1) at (1.05,2.55) {};
\node[centroid] (c2) at (4.85,2.15) {};
\node[centroid] (c3) at (2.75,2.75) {};

\node[smalllab, anchor=south east] at ($(c1)+(0.05,-0.05)$)
    {$\mu_1^*$};
\node[smalllab, anchor=south west] at ($(c2)+(0.05,-0.2)$)
    {$\mu_2^*$};
\node[smalllab, anchor=north west] at ($(c3)+(-0.20,-0.02)$)
    {$\mu_3^*$};

\node[smalllab, blue!55!black] at (0.85,3.75) {$\mathcal V_1$};
\node[smalllab, green!45!black] at (5.45,3.75) {$\mathcal V_2$};
\node[smalllab, orange!70!black] at (3.05,3.75) {$\mathcal V_3$};

\node[smalllab, align=center, fill=white, inner sep=1.5pt] at (4.25,0.45)
{affine SMML\\decision boundaries};

\draw[->, black!60, line width=0.6pt]
    (4.05,0.70) -- (2.95,1.75);

\node[lab, align=center] at (3.20,-0.85)
{
\(\displaystyle
P_j^*
=
\{\mathbf{x}\in\mathcal X_n:T(\mathbf{x})\in\mathcal V_j\}
=
T^{-1}(\mathcal V_j)
\)
};

\node[lab] at (3.2,4.55)
{Sufficient-statistic space};
\end{tikzpicture}
\caption{
Exact 
 SMML cells in sufficient-statistic space for an exponential family. Pairwise comparisons of two-part codelengths become affine inequalities in \(T(\mathbf{x})\), so each region \(\mathcal V_j\) is a convex polyhedron. The~grey points represent attainable sufficient-statistic values, while the black points indicate the corresponding expectation-coordinate centroids \(\mu_j^*\). The~SMML cell in data space is the pullback \(P_j^*=T^{-1}(\mathcal V_j)\).
}
\label{fig:smml:polyhedral}
\end{center}
\end{figure}

For multinomial models, the~structure is especially transparent. In~the binomial case, the~sufficient statistic is one-dimensional, so each SMML cell is an interval of count values. For~a \(K\)-category multinomial model, each pairwise SMML boundary is an affine hyperplane in the count vector, and~each cell is obtained by intersecting the multinomial lattice with a finite collection of half-spaces. For~a fixed finite codebook, this is a weighted analogue of the logarithmic Voronoi partitions of Alexandr and Heaton~\cite{AlexandrHeaton21}. 
%
%

%

\subsection{Example: Poisson~Model}
The Poisson model is a one-dimensional example of both the centroid condition in Proposition~\ref{prop:smml:expfam} and the exact affine cell structure in Theorem~\ref{thm:smml:polyhedra}. Let \(X_1,\dots,X_n\) be independent Poisson observations with common mean \(\lambda > 0\), and~write
\[
T(\mathbf{x})=\sum_{i=1}^n x_i
\]
for the sufficient statistic of the sample \({\bf x}=(x_1,\dots,x_n)\). The~joint likelihood is

\[
p_n( {\bf x}\mid \lambda)
=
\left(\prod_{i=1}^n \frac{1}{x_i!}\right)
\exp\{T({\bf x})\log\lambda - n\lambda\},
\]
so the model is a regular canonical exponential family with natural parameter
\[
\eta=\log\lambda,
\]
sufficient statistic \(T({\bf x})\), and~log-partition function
\[
A_n(\eta)=ne^\eta=n\lambda.
\]
For a fixed SMML cell \(P_j\), Proposition~\ref{prop:smml:expfam} gives the moment-matching condition
\[
E_{\lambda_j^*}[T({\bf X})]
=
\sum_{{\bf x}\in P_j} w_j({\bf x}) T({\bf x}),
\qquad
w_j({\bf x})=\frac{r_n({\bf x})}{q_j}.
\]
Since \(E_\lambda[T({\bf X})]=n\lambda\), this becomes
\[
n\lambda_j^*
=
E_{r_n}[T({\bf X})\mid {\bf X}\in P_j],
\]
and hence
\[
\lambda_j^*
=
\frac{1}{n}E_{r_n}[T({\bf X})\mid {\bf X}\in P_j].
\]
Thus, the Poisson SMML codepoint is the marginal-predictive conditional expectation of the sample mean over the cell, which is the Poisson instance of the KL/Bregman centroid condition in Proposition~\ref{prop:smml:expfam}.

The exact SMML partition is equally explicit. For~a fixed codebook \(\{\lambda_1,\dots,\lambda_k\}\) and assertion probabilities \(\{q_1,\dots,q_k\}\), the~two-part codelength for assigning sample \({\bf x}\) to assertion \(j\) is
\[
\Lambda_j({\bf x})
=
-\log q_j - \log p_n({\bf x}\mid \lambda_j).
\]
Comparing assertions \(j\) and \(\ell\), and~cancelling the common base measure term, gives
\[
(\log\lambda_j-\log\lambda_\ell) T({\bf x})
\ge
\log q_\ell-\log q_j + n(\lambda_j-\lambda_\ell).
\]
Hence, each exact SMML cell is the pullback of an interval in the sufficient statistic \(T({\bf x})\), and~the pairwise threshold is
\[
t_{j\ell}
=
\frac{\log q_\ell-\log q_j + n(\lambda_j-\lambda_\ell)}
{\log\lambda_j-\log\lambda_\ell},
\qquad
\lambda_j\neq \lambda_\ell ,
\]
where the inequality defining membership of cell \(j\) is
\(T(\mathbf{x})\ge t_{j\ell}\) when \(\lambda_j>\lambda_\ell\) and
\(T(\mathbf{x})\le t_{j\ell}\) when \(\lambda_j<\lambda_\ell\).
In particular, after~ordering the assertions by their Poisson means, the~exact SMML partition is obtained by intersecting the integer lattice of attainable sufficient-statistic values with a finite collection of threshold intervals. This is the one-dimensional Poisson instance in Theorem~\ref{thm:smml:polyhedra}.

The asymptotic Fisher--Rao geometry of Section~\ref{sec:fisherrao} is also especially simple in this example. For~one Poisson observation,
\[
J_1(\lambda)=\frac{1}{\lambda},
\]
so the per-observation Fisher--Rao metric is

\[
ds^2=\frac{d\lambda^2}{\lambda}.
\]
The corresponding Fisher--Rao distance is~\cite{MiyamotoEtAl24}
\vspace{8pt}
\[
d_F(\lambda_1,\lambda_2)=2|\sqrt{\lambda_1}-\sqrt{\lambda_2}|.
\]
Thus, in~the high-resolution regime of Theorem~\ref{thm:smml:fr:equivalence}, local SMML assignment compares nearby Poisson assertions using
\[
4(\sqrt{\lambda}-\sqrt{\lambda_j})^2-\frac{2}{n}\log q_j,
\]
showing explicitly how assertion probabilities shift the Voronoi boundaries. The~finite-sample threshold structure in the sufficient statistic \(T({\bf x})\) and the asymptotic weighted Voronoi structure in \(\lambda\)-space are therefore consistent descriptions of the same underlying SMML geometry at different levels of~approximation.
\section{Discussion}
\label{sec:discussion}
This paper has developed an information-theoretic and geometric perspective on strict minimum message length (SMML) estimation in regular parametric models. The~central insight is that SMML is an entropy--cross-entropy optimisation principle that produces a finite-resolution approximation of a continuous statistical model. In~the high-resolution regime, this approximation acquires a local geometry governed by Kullback--Leibler (KL) divergence and the Fisher~information.

At the level of partitions, the~SMML decision rule is asymptotically equivalent to a weighted Fisher--Rao Voronoi rule in parameter space, pulled back through the maximum likelihood estimator (MLE). The~assertion probabilities appear as additive weights, reflecting the contribution of assertion entropy to the two-part codelength. At~the level of codepoints, each SMML assertion is the KL projection of the normalised cellwise predictive distribution onto the model family. Thus, SMML simultaneously quantises the parameter space and performs local KL-optimal fitting within each quantisation~cell.

The finite assertion code is first-order asymptotically lossless in the sense that, under~the high-resolution regime of~\ref{ass:A7}, replacing the MLE by the selected SMML codepoint introduces only an \(O_p(n^{-1/2})\) perturbation. Hence, the SMML coding constraint does not alter classical likelihood asymptotics at first order. Rather, it imposes a structured entropy-coded discretisation whose geometry is
determined by the underlying statistical model.
For regular exponential families, the~geometry becomes exact. In~canonical form, SMML codepoints satisfy a moment-matching condition whereby the expectation parameter of the assertion equals the marginal-predictive weighted average of the sufficient statistic in the cell. Equivalently, SMML codepoints are KL/Bregman centroids, connecting the codepoint condition to Bregman centroid and clustering theory~\cite{BanerjeeEtAl05}. Pairwise codelength comparisons also yield affine inequalities in sufficient-statistic space, so that the exact SMML partition is the pullback of a polyhedral partition. This is the countable-data analogue of Dowty's polytope theorem for SMML estimators with continuous sufficient statistics~\cite{Dowty13}, and~is consistent with the affine structure of Bregman Voronoi diagrams~\cite{BoissonnatEtAl10}. In~the SMML setting, the~assertion probabilities provide the affine offsets in these Voronoi-type~boundaries.

The computational difficulty of exact SMML optimisation also suggests a role for geometric approximations. Farr and Wallace~\cite{FarrWallace02} proved that SMML inference is NP-hard via a reduction from Exact Cover by 3-Sets (X3C), although~special one-dimensional cases admit efficient algorithms. The~Fisher--Rao and Bregman Voronoi structures identified here may therefore provide useful guidance for practical SMML codebook-construction procedures. Theorem~\ref{thm:smml:fr:equivalence} implies that, in~the high-resolution regime, the~assignment step is approximately a weighted Fisher--Rao Voronoi rule. 
Theorem~\ref{thm:smml:weighted:mle} further suggests a corresponding update step, in~which each codepoint is replaced by the weighted average of the MLEs assigned to its cell, or, more generally, by~the fixed-cell KL projection of the conditional marginal distribution. This leads naturally to alternating schemes for approximate SMML optimisation. In~exponential families, Proposition~\ref{prop:smml:expfam} and Theorem~\ref{thm:smml:polyhedra} show that assignment reduces to threshold or affine half-space comparisons in sufficient-statistic space, while codepoint updates reduce to moment matching. Such procedures do not remove the global NP-hardness of exact SMML optimisation, but~they provide a practical way to exploit the induced Fisher--Rao or Bregman geometry when constructing approximate SMML~codebooks.

A limitation of the present analysis is that the high-resolution LAN-scale regime is assumed rather than derived from the global SMML optimisation problem. The~results of Sections~\ref{sec:fisherrao} and \ref{sec:consistency} therefore describe the local geometry of SMML once the finite assertion codebook resolves the model at the LAN \(n^{-1/2}\) scale. A~local split-optimality heuristic supports this scale, since the entropy cost of splitting a cell of mass \(q_j\) is of order \(q_j\), whereas the local quadratic reduction in detail codelength is of order \(q_j n r_j^2\) for a cell of effective Fisher--Rao radius \(r_j\). Establishing a rigorous global derivation of this regime for exact SMML optima remains an important direction for future~work.

Overall, the~results show that SMML provides an information-geometric bridge between statistical inference, quantisation, and~compression. Starting from an entropy-based coding criterion, it induces KL projections, Bregman centroids, and~Voronoi-type geometry in both asymptotic and exponential-family settings. In~practical terms, the~results suggest geometry-aware approximate SMML procedures for finite codebook design, model discretisation, and~finite-resolution statistical compression in regular parametric families.

\bibliographystyle{unsrt}  
\bibliography{bibliography}  

\end{document}